\documentclass[letterpaper, 10 pt, conference]{ieeeconf}
\IEEEoverridecommandlockouts 
\usepackage{amsmath, amssymb}
\usepackage{graphicx} 
\usepackage{cleveref}
\usepackage{empheq, xcolor}

\newcommand{\E}{\mathbb{E}}

\newcommand{\pair}[1]{\ensuremath{\left\langle #1 \right\rangle}}
\renewcommand{\Re}{\ensuremath{\mathbb{R}}}

\newcommand{\deriv}[2]{\ensuremath{\frac{\partial #1}{\partial #2}}}

\renewcommand{\div}{\mathrm{div}}

\newcommand{\Sph}{\mathsf{S}}

\newcommand{\g}{\ensuremath{\mathsf{g}}}

\newcommand{\EditTL}[1]{{\color{blue}\protect #1}}
\renewcommand{\EditTL}[1]{}

\newtheorem{theorem}{Theorem}
\newtheorem{lemma}{Lemma}

\newtheorem{remark}{Remark}

\newtheorem{example}{Example}

\title{\LARGE\bf Global Tensor Field Formulation of the Fokker–Planck Equation on Riemannian Manifolds}

\author{Taeyoung Lee and Gregory S. Chirikjian
    \thanks{Taeyoung Lee, Mechanical and Aerospace Engineering, George Washington University, Washington, DC 20052, {\tt tylee@gwu.edu}}
    \thanks{Gregory S. Chirikjian, Willis F. Harrington Professor of Mechanical Engineering, University of Delaware, Newark DE 19716 {\tt gchirik@udel.edu}}
    \thanks{\textsuperscript{\footnotesize\ensuremath{*}}This research has been supported in part by AFOSR MURI FA9550-23-1-0400, and ONR N00014-23-1-2850.}
}

\begin{document}

\maketitle

\begin{abstract}
    This paper presents a global, coordinate-free formulation of the Fokker–Planck equation on Riemannian manifolds.  
    In the Stratonovich formulation, the infinitesimal generator is expressed intrinsically through Lie derivatives, and its adjoint is derived via the divergence theorem, yielding a concise geometric form of the Fokker–Planck equation.  
    In the It\^{o} formulation, a \emph{diffusion tensor field} is introduced to generalize the Euclidean diffusion matrix, and a tensor-field-based analysis establishes an intrinsic double-divergence representation of the Fokker–Planck equation.  
    The proposed framework provides a globally valid and geometrically consistent interpretation of diffusion and probability transport on Riemannian manifolds, supported by compact and intuitive proofs.
\end{abstract}

\section{Introduction}

The Fokker-Planck equation plays a central role in connecting stochastic dynamics with probabilistic and analytical descriptions of diffusion~\cite{bogachev2022fokker,chirikjian2009stochastic}. 
While a stochastic differential equation governs the evolution of individual sample paths, the corresponding Fokker-Planck equation provides a deterministic partial differential equation that describes how the probability density of the process evolves over time. 
It thus serves as a fundamental bridge between microscopic random motion and macroscopic statistical behavior, and is a critical tool in estimation, filtering, and machine learning. 
On Riemannian manifolds, the Fokker–Planck equation captures how curvature and geometric structure influence the transport and dispersion of probability, providing a natural extension of conservation laws and diffusion phenomena to curved spaces. 

The seminal works of Yosida~\cite{yosida1949integration} and It\^o~\cite{ito1950stochastic,ito1953stochastic} established the mathematical foundation for formulating the Fokker-Planck equation on differentiable manifolds.  
Yosida provided an analytic framework by extending diffusion operators and semigroup theory to compact Riemannian spaces, while It\^o introduced the stochastic differential formulation that defined diffusion processes intrinsically and derived their corresponding Fokker-Planck equations from the generator.  
Together, these works unified probabilistic, geometric, and analytic perspectives, laying the groundwork for modern stochastic analysis on manifolds~\cite{chirikjian2009stochastic,elworthy1982stochastic,emery1989stochastic,hsu2002stochastic}.

Despite these foundational developments, classical formulations of the Fokker-Planck equation on manifolds are largely expressed in local coordinates, where geometric quantities such as the metric, connection, and Christoffel symbols appear explicitly.  
While this coordinate-dependent approach offers analytic tractability, it obscures the underlying geometric structure and complicates extensions to globally defined processes on manifolds with nontrivial topology.  
A fully intrinsic, coordinate-free formulation is therefore desirable to capture the global behavior of diffusion and transport in a manner consistent with the manifold’s geometry, independent of any particular parameterization.  

This paper develops a global, coordinate-free formulation of the Fokker-Planck equation on a Riemannian manifold directly from first principles of stochastic dynamics and differential geometry.  
In the Stratonovich formulation, the infinitesimal generator is expressed intrinsically in terms of Lie derivatives along vector fields, and its adjoint is obtained using the divergence theorem on manifolds, leading to a compact and geometrically consistent expression of the Fokker-Planck equation (\Cref{thm:FP_Strat}).

In the It\^{o} formulation, we introduce the concept of a \emph{diffusion tensor field}, which generalizes the conventional diffusion matrix in Euclidean space through tensor products of diffusion vector fields, providing an intrinsic representation of anisotropic diffusion on curved spaces (\Cref{thm:generator_Ito}).  
To construct the It\^{o} Fokker–Planck equation in this intrinsic form, we present a tensor field framework, for example, by utilizing the extensions of divergence and covariant derivatives to higher-order tensor fields and an integration-by-parts identity for tensor fields (\Cref{lem:IBP_tensor}).
This enables the It\^{o} Fokker–Planck equation to be represented compactly as a double divergence, which encapsulates curvature-adjusted probability spreading governed by the manifold’s metric structure (\Cref{thm:FP_Ito}).  
More importantly, the proposed diffusion tensor fields geometric and physical interpretation of the Fokker-Planck equation on a manifold (\Cref{rem:D,rem:cont}).
For example, we show that an isotropic diffusion leads to the Brownian motion with an appropriate choice of the drift field (\Cref{ex:Brownian_Strat,ex:Brownian_Ito}).

Recently, Huang~\cite{huang2022riemannian} presented a coordinate-free form of the Fokker–Planck equation in the Stratonovich sense by converting the coordinate-based expression of~\cite{chirikjian2009stochastic} into intrinsic operators.  
In contrast, the Stratonovich formulation proposed here is derived directly from geometric first principles. 
Next, the proposed tensor field formulation of the It\^o Fokker-Planck equation has not been reported. 

\EditTL{In summary, the main contributions of this paper lie in the tensor-field formulation of the Fokker–Planck equation, which offers an elegant and compact expression supported by concise geometric proofs grounded in intrinsic differential operators, and enables Bayesian estimation on Riemannian manifolds.}

\section{Background} \label{sec:Background}

We briefly review key concepts from differential geometry and tensor analysis~\cite{lee2006riemannian,marsden2013introduction,petersen2006riemannian}.

\subsection{Riemannian Manifold}\label{sec:diff_geo}

Let $(M,\g)$ be an $n$-dimensional Riemannian manifold, where $M$ is a smooth manifold and $\g$ is a Riemannian metric. 
For a point $p \in M$, the \emph{tangent space} $T_p M$ is the vector space consisting of all possible tangent vectors at $p$. 
The \emph{tangent bundle} $TM$ is the disjoint union of the tangent spaces at all points of the manifold.
A \emph{vector field} is a smooth map $X:M\to TM$ such that $X(p)\in T_pM$ for all $p\in M$.
The set of all smooth vector fields on $M$ is denoted by $\mathfrak{X}(M)$.

The \emph{cotangent space} $T_p^*M$ is the dual space of $T_pM$, consisting of all linear maps $\alpha:T_pM \to \Re$.
An element of $T_p^*M$ is called a \emph{covector} or \emph{one-form}.
A \emph{covector field} is a smooth section $\alpha:M\to T^*M$, assigning to each $p\in M$ a covector $\alpha(p)\in T_p^*M$.
The collection of all covector fields on $M$ is denoted by $\Omega^1(M)$.

We now review several fundamental differential operators. 
Let $C^\infty(M)$ denote the set of smooth real-valued functions on $M$.
For $f\in C^\infty(M)$ and $X\in\mathfrak{X}(M)$, the \emph{Lie derivative of $f$ along $X$} is defined by
\begin{align}
    \mathcal{L}_X f = X[f] \triangleq df(X) \in C^\infty(M), \label{eqn:lie_derivative}
\end{align}
where $df$ is the differential of $f$, viewed as a covector field.

Given a Riemannian metric, there exists a unique torsion-free connection 
$\nabla:\mathfrak{X}(M)\times\mathfrak{X}(M)\rightarrow\mathfrak{X}(M)$ that is compatible with the metric, called the \emph{Levi--Civita connection}.
For $X,Y\in\mathfrak{X}(M)$, $\nabla_X Y$ is referred to as the \textit{covariant derivative}.

The \emph{divergence} $\div_\mu:\mathfrak{X}(M)\rightarrow C^\infty(M)$ of a vector field $X$ is the trace of the linear map $Y\mapsto \nabla_Y X$~\cite[page~88]{lee2006riemannian}, given by
\begin{align}
    \div_\mu X = \mathrm{tr}\big(Y \mapsto \nabla_Y X\big). \label{eqn:div}
\end{align}
The \emph{Hessian} is the symmetric $(0,2)$-tensor defined by
\begin{align}
    \mathrm{Hess}_f(X,Y) & 
    = X[Y[f]] - (\nabla_X Y)[f] \nonumber \\
    & = \mathrm{Hess}_f(Y,X).\label{eqn:Hess}
\end{align}

\subsection{Tensor Fields} \label{sec:appendix_tensor}

Given a Riemannian manifold $(M,\g)$, a \emph{tensor field of type $(r,s)$} on $M$, denoted by $\mathcal{T}^{(r,s)}(M)$, is a smooth, $C^\infty(M)$-multilinear map:
\begin{align}
    T :&
    \underbrace{\Omega^1(M) \times \cdots \times \Omega^1(M)}_{r~\text{times}}
    \times
    \underbrace{\mathfrak{X}(M) \times \cdots \times \mathfrak{X}(M)}_{s~\text{times}}\nonumber\\
       & \to C^\infty(M),
\end{align}
where the type $(r,s)$ specifies the number of \textit{contravariant} (upper) and \textit{covariant} (lower) indices, respectively.
For example, a scalar field is a $(0,0)$ tensor, a vector field is a $(1,0)$ tensor, and a covector field (or one-form) is a $(0,1)$ tensor.
In local coordinates $\{x^i\}$, a $(r,s)$ tensor $T$ has components $T^{i_1\cdots i_r}{}_{j_1\cdots j_s}$.

Given two tensor fields $T_1 \in \mathcal{T}^{(r_1,s_1)}(M)$ and $T_2 \in \mathcal{T}^{(r_2,s_2)}(M)$,
their \emph{tensor product}, denoted by $T_1 \otimes T_2$, is a tensor field of type $(r_1+r_2,\, s_1+s_2)$ defined by
\begin{align}
    &(T_1 \otimes T_2)
    (\alpha^1,\ldots,\alpha^{r_1+r_2}, v_1,\ldots,v_{s_1+s_2}) \nonumber\\
    &= T_1(\alpha^1,\ldots,\alpha^{r_1}, v_1,\ldots,v_{s_1})\nonumber\\
    & \quad\times   T_2(\alpha^{r_1+1},\ldots,\alpha^{r_1+r_2}, v_{s_1+1},\ldots,v_{s_1+s_2}),
    \label{eqn:tensor_product_eval}
\end{align}
for $\alpha^i\in\Omega^1(M)$ and $v_i\in\mathfrak{X}(M)$.
The tensor product acts on the concatenation of the arguments of $T_1$ and $T_2$, evaluating each separately and multiplying the results.
For instance, if $X$ and $Y$ are vector fields (type $(1,0)$), then
$X \otimes Y$ is a $(2,0)$ tensor field defined by
\[
    (X \otimes Y)(\alpha,\beta) = \alpha(X)\,\beta(Y),
\]
for any $\alpha,\beta\in\Omega^1(M)$.

A \emph{contraction} of a tensor field is the operation of pairing one contravariant index with one covariant index and summing over it.
More generally, given two tensor fields 
$T_1 \in \mathcal{T}^{(r_1,s_1)}(M)$ and 
$T_2 \in \mathcal{T}^{(r_2,s_2)}(M)$,
one can form their tensor product $T_1\otimes T_2$ of type $(r_1+r_2,\, s_1+s_2)$ and then contract any matching pair of upper and lower indices to obtain a tensor of reduced type.
This operation is denoted generically by $T_1:T_2$, when the contracted indices are clear from context.  
For example, if $T_1\in\mathcal{T}^{(2,0)}(M)$ and $T_2\in\mathcal{T}^{(0,2)}(M)$, 
\begin{align}
    T_1: T_2 = (T_1)^{ij} (T_2)_{ij}. \label{eqn:contraction_2}
\end{align}
Throughout this paper, we adopt the Einstein convention: we take the sum of any index that appears in both of the upper slot and the lower slot.

The covariant derivative of a vector field extends naturally to tensor fields.
Specifically, for a tensor field $T$ of type $(r,s)$, the \textit{covariant derivative} $\nabla T$ is a tensor of type $(r,s+1)$ defined by
\begin{align}
    &(\nabla T)(\alpha^1,\ldots,\alpha^r, v_1,\ldots,v_s, X) \nonumber\\
    &\triangleq (\nabla_X T)(\alpha^1,\ldots,\alpha^r, v_1,\ldots,v_s) \nonumber\\
    &= X\!\left[T(\alpha^1,\ldots,\alpha^r, v_1,\ldots,v_s)\right] \nonumber\\
    &\quad - \sum_{j=1}^r T(\alpha^1,\ldots,\nabla_X \alpha^j,\ldots,\alpha^r, v_1,\ldots,v_s) \nonumber\\
    &\quad - \sum_{i=1}^s T(\alpha^1,\ldots,\alpha^r, v_1,\ldots,\nabla_X v_i,\ldots,v_s),
    \label{eqn:covariant_derivative_tensor}
\end{align}
for vector fields $X,v_1,\ldots,v_s\in \mathfrak{X}(M)$ and covector fields $\alpha^1,\ldots,\alpha^r\in\Omega^1(M)$.
As such, the covariant derivative of a tensor field increases the number of covariant slots by one.
For example, let $f\in C^\infty(M)$ be a $(0,0)$ tensor. 
Using \eqref{eqn:covariant_derivative_tensor}, $\nabla f$ is a $(0,1)$ tensor evaluated by
\begin{align}
    (\nabla f)(X) = X[f], \label{eqn:nabla_f_X}
\end{align}
for any $X\in\mathfrak{X}(M)$.
And applying \eqref{eqn:covariant_derivative_tensor} again, $\nabla (\nabla f)$ is a $(0,2)$ tensor obtained by
\begin{align*}
    (\nabla (\nabla f))(X,Y) & = Y[(\nabla f)(X)] - (\nabla f)(\nabla_Y X),\\
                             & = Y[X[f]] - (\nabla_Y X)[f],
\end{align*}
for any $X,Y\in\mathfrak{X}(M)$, where the second equality is due to \eqref{eqn:nabla_f_X}.
Comparing this with \eqref{eqn:Hess}, and noting that the Hessian is symmetric, we obtain
\begin{align}
    \mathrm{Hess}_f(X,Y) =  (\nabla (\nabla f))(X,Y), \label{eqn:Hess_cov}
\end{align}
which shows that the Hessian corresponds to the double covariant derivative.
But, note that the inner covariant derivative of \eqref{eqn:Hess_cov} acts on a $(0,0)$ tensor field, and the outer covariant derivative acts on a $(0,1)$ tensor field. 

For tensor fields $T,S$ of arbitrary type, the following product rule (Leibniz rule) holds:
\begin{align}
    \nabla_X (T\otimes S) = (\nabla_X T)\otimes S + T\otimes (\nabla_X S). \label{eqn:cov_tensor_product}
\end{align}

The \emph{divergence of a tensor field} $T$ is obtained by contracting the last contravariant index of $\nabla T$ with its newly introduced covariant index.
If $T$ is a $(r,s)$ tensor, then $\nabla T$ is a $(r, s+1)$ tensor, and this contraction yields a $(r-1, s)$ tensor, denoted $\div_\mu T$.
More explicitly, the divergence $\div_\mu T$ is defined as the trace of the $(1,1)$ tensor map formed by fixing all arguments \textit{except} the last contravariant and last (new) covariant slots:
\begin{align}
    &(\div_\mu T) (\alpha^1,\ldots, \alpha^{r-1}, v_1, \ldots, v_{s}) \nonumber \\
    & \quad = \mathrm{tr}\left( (\alpha, X) \mapsto (\nabla T)(\alpha^1, \ldots, \alpha^{r-1}, \alpha, v_1, \ldots, v_s, X) \right) \nonumber \\
    & \quad = \sum_{i=1}^n (\nabla_{E_i} T)(\alpha^1, \ldots, \alpha^{r-1}, E^i, v_1, \ldots, v_s),\label{eqn:div_tensor}
\end{align}
where $\{E_i\}$ is a local orthonormal frame and $\{E^i\}$ is its dual coframe. 
Thus, the divergence of a tensor field decreases the contravariant slot of the tensor by 1.

For example, if $X\in\mathfrak{X}(M)$ is a vector field, or $(1,0)$ tensor field ($r=1, s=0$), this definition reduces to
\[
    \div_\mu (X) = \sum_{i=1}^n (\nabla_{E_i} X)(E^i) = \sum_{i=1}^n E^i(\nabla_{E_i} X),
\]
which is consistent with \eqref{eqn:div}.

Let $T_1\in\mathcal{T}^{(r,s)}(M)$ and $T_2\in\mathcal{T}^{(s,r)}(M)$.
As they have the opposite tensor type, their contraction reduces to a scalar.
The $L^2$-inner product between tensor fields of opposite type, denoted by $\pair{\cdot,\cdot}_{\mathcal{T}}$, is defined as the integral of their contraction over $M$:
\begin{align}
    \pair{T_1, T_2}_{\mathcal{T}} = \int_M (T_1: T_2) \,d\mu(x), \label{eqn:inner_L2_tensor}
\end{align}
where $\mu$ denotes the volume form of $M$.

\section{Intrinsic Fokker-Planck Equations}\label{sec:BMM}

\subsection{Stochastic Processes and the Fokker-Planck Equation}

Let $x(t)$ be a continuous stochastic process on a Riemannian manifold $(M,\g)$.
The dynamics of this process are characterized by its \emph{infinitesimal generator} $\mathcal{A}:C^\infty(M)\to C^\infty(M)$, 
a differential operator that gives the expected instantaneous rate of change of any smooth function along the process.
More explicitly, for any $f\in C^\infty(M)$ and $x_0 \in M$,
\begin{align}
    (\mathcal{A} f)(x_0) = \lim_{t \to 0^+} \frac{\E[f(x(t))\,|\, x(0) = x_0] - f(x_0)}{t},
\end{align}
which encodes the local drift and diffusion properties of the process $x(t)$.

For any $p, q \in C^\infty_c(M)$, that is, smooth functions on $M$ with compact support, let the $L^2$-inner product be defined as
\begin{align}
    \pair{p, q} = \int_M p(x) q(x) d\mu(x), \label{eqn:inner_L2}
\end{align}
where $d\mu$ is the Riemannian volume form on $(M,\g)$.

The \textit{adjoint} of the generator, denoted $\mathcal{A}^*$, is defined with respect to the $L^2$-inner product on $M$.
Specifically, the dual $\mathcal{A}^*: C^\infty_c(M)\rightarrow C_c^\infty(M)$ is the unique operator satisfying
\begin{align}
    \langle p, \mathcal{A}q \rangle = \langle \mathcal{A}^* p, q \rangle, \label{eqn:dual}
\end{align}
for any $p,q\in C^\infty_c(M)$.

This adjoint operator is crucial for describing the evolution of the probability density function, denoted by $p_t(x) \triangleq p(t,x)$ of the process.
The expected value of a test function $q(x(t))$ evolves in time as
\begin{align*}
    \frac{d}{dt} \mathbb{E}[q(x(t))] = \frac{d}{dt} \int_M q(x) p_t(x) \, d\mu(x) = \left\langle \frac{\partial p_t}{\partial t}, q \right\rangle.
\end{align*}
By the definition of the generator, this time derivative is also equal to
\begin{align*}
    \frac{d}{dt} \mathbb{E}[q(x(t))] = \mathbb{E}[\mathcal{A}q(x(t))] = \langle \mathcal{A}q, p_t \rangle
    = \langle \mathcal{A}^* p_t, q \rangle,
\end{align*}
where the last equality is from the definition of the adjoint operator, given by \eqref{eqn:dual}.
Equating the above two equations,
\begin{align}
    \left\langle \frac{\partial p_t}{\partial t}, q \right\rangle = \langle \mathcal{A}^* p_t, q \rangle.
\end{align}
Since this equality must hold for any arbitrary function $q \in C^\infty_c(M)$, we obtain the \textit{Fokker-Planck equation} in an abstract form as
\begin{align}
    \frac{\partial p_t}{\partial t} = \mathcal{A}^* p_t. \label{eqn:FP_A}
\end{align}
This provides a deterministic partial differential equation describing the time evolution of the probability density $p(t,x)$ on the manifold $M$.

Next, we derive an explicit expression of the Fokker-Planck equation for a stochastic process defined by a stochastic differential equation (SDE). 
We first recover the Stratonovich formulation and then present the Itô formulation in terms of tensor fields.

\subsection{Fokker-Planck Equation for Stratonovich SDE}

Consider a stochastic differential equation on a manifold $M$ of the form
\begin{align}
    dx = X(x)\,dt + \sum_{i=1}^m \sigma_i(x) \circ dW_i, \label{eqn:SDE}
\end{align}
where $X, \sigma_1, \ldots, \sigma_m \in \mathfrak{X}(M)$ are smooth vector fields, and $W_i$ are independent real-valued Wiener processes.
This equation is written in the \emph{Stratonovich} sense, which ensures that the solution flow respects the manifold structure and that the standard rules of calculus apply.

\begin{theorem}[Generator of Stratonovich SDE]\label{thm:generator_Strat}
    The infinitesimal generator $\mathcal{A}:C^\infty(M)\rightarrow C^\infty(M)$ of the Stratonovich stochastic differential equation \eqref{eqn:SDE} on $f\in C^\infty(M)$ is given by
    \begin{align}
        \boxed{\mathcal{A} f = X[f] + \frac{1}{2}\sum_{i=1}^m \sigma_i [\sigma_i[f]].} \label{eqn:A_Strat}
    \end{align}
\end{theorem}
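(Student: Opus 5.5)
The plan is to apply the Stratonovich chain rule to $f(x(t))$, convert the resulting Stratonovich integrals into It\^o integrals, and then take expectations so that the martingale part drops out. Because the Stratonovich calculus obeys the ordinary chain rule, for any $f\in C^\infty(M)$ we have
\begin{align*}
    df(x) = X[f](x)\,dt + \sum_{i=1}^m \sigma_i[f](x)\circ dW_i .
\end{align*}
This identity is intrinsic. One can verify it in a local chart, where the SDE \eqref{eqn:SDE} reads $dx^k = X^k dt + \sum_i \sigma_i^k \circ dW_i$ and the ordinary chain rule gives $df = \partial_k f\, dx^k$. We then recognize $\partial_k f X^k = X[f]$ and $\partial_k f \sigma_i^k = \sigma_i[f]$ through \eqref{eqn:lie_derivative}. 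Since both sides are coordinate-free, the identity holds globally; to handle paths that leave a chart, we localize with stopping times.

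Next we turn each Stratonovich term into an It\^o term. For a semimartingale $Y$, the relation is $Y\circ dW_i = Y\,dW_i + \frac12 d\langle Y, W_i\rangle$. Take $Y=\sigma_i[f](x)$, which is a smooth function $g_i\in C^\infty(M)$ evaluated along the process. Applying the chain rule from the previous step to $g_i$ gives $dg_i(x) = X[g_i]\,dt + \sum_j \sigma_j[g_i]\circ dW_j$. Its martingale part is therefore $\sum_j \sigma_j[g_i]\,dW_j$, and the $W_j$ are independent, so
\begin{align*}
    d\langle g_i(x), W_i\rangle = \sigma_i[g_i](x)\,dt = \sigma_i[\sigma_i[f]](x)\,dt .
\end{align*}
Substituting this back, we obtain
\begin{align*}
    f(x(t)) - f(x_0) = \int_0^t \Big( X[f] + \tfrac12\sum_i \sigma_i[\sigma_i[f]] \Big)(x(s))\,ds + \sum_i\int_0^t \sigma_i[f](x(s))\,dW_i(s).
\end{align*}

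Finally, we take the conditional expectation given $x(0)=x_0$. The It\^o integrals have zero mean provided their integrands are bounded. That holds when $f$ has compact support or $M$ is compact; otherwise we stop the process at the exit time of a compact neighborhood of $x_0$, and this exit time is positive almost surely. We then divide by $t$ and let $t\to0^+$. The integrand is continuous, and sample paths are continuous, so bounded convergence gives the limit $X[f](x_0)+\frac12\sum_i\sigma_i[\sigma_i[f]](x_0)$, which is \eqref{eqn:A_Strat}.

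The main obstacle is the quadratic covariation step. It needs the Stratonovich chain rule to be applied twice, first to $f$ and then to $\sigma_i[f]$, and it needs the martingale part of $\sigma_i[f](x)$ to be correctly isolated. The remaining steps are routine localization and dominated-convergence arguments.
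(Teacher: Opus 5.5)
Your proposal is correct and is essentially the standard argument the paper defers to (Ikeda--Watanabe, p.~253). You apply the Stratonovich chain rule to $f$, use the same chain rule on $\sigma_i[f]$ to isolate its martingale part and obtain the correction $\frac12\,d\langle \sigma_i[f](x),W_i\rangle=\frac12\,\sigma_i[\sigma_i[f]](x)\,dt$, and then discard the zero-mean It\^o integrals. The one loose end is the localization for unbounded $f$ on noncompact $M$: $\tau>0$ a.s.\ gives the limit only for the stopped process, and transferring it to $\E[f(x(t))]$ also needs $\E\big[|f(x(t))-f(x(t\wedge\tau))|\big]=o(t)$ (for example $P(\tau\le t)=o(t)$ together with integrability of $f(x(t))$), which is why such generators are normally stated on $C^\infty_c(M)$.
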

\vspace*{0.3cm}
\begin{proof}
    See~\cite[pp. 253]{ikeda1989stochastic} and~\cite[Theorem 2]{lee2025brownian}.
\end{proof}

Next, the following identity is repeatedly used in the subsequent development of the Fokker-Planck equation.
\begin{lemma}\label{lem:IBP}
    Suppose $p,q\in C^\infty_c(M)$ are smooth, compactly supported functions on $M$, and $X\in\mathfrak{X}(M)$ is a vector field. Then,
    \begin{align}
        \pair{p, X[q]} & = \pair{-\div_\mu(pX), q},\label{eqn:IBP}
    \end{align}
    where $\pair{\cdot, \cdot}$ denotes the $L^2$-inner product given by \eqref{eqn:inner_L2}.
\end{lemma}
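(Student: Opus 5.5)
The plan is to combine the Leibniz rule for the divergence with the divergence theorem on $M$. The product rule to establish first is
\begin{align*}
    \div_\mu(qY) = Y[q] + q\,\div_\mu Y,
\end{align*}
valid for any $q\in C^\infty(M)$ and $Y\in\mathfrak{X}(M)$. It follows from the trace definition \eqref{eqn:div}. Since $\nabla_Z(qY) = Z[q]\,Y + q\,\nabla_Z Y$, taking the trace of $Z\mapsto \nabla_Z(qY)$ in a local orthonormal frame $\{E_i\}$ gives
\begin{align*}
    \sum_i E^i(E_i[q]\,Y) + q\,\div_\mu Y = dq(Y) + q\,\div_\mu Y .
\end{align*}
Here we used $\sum_i E_i[q]\,E^i = dq$, and $dq(Y) = Y[q]$ by \eqref{eqn:lie_derivative}.

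Next apply this identity with the vector field $Y = pX$. This gives
\begin{align*}
    \div_\mu(q\,pX) = p\,X[q] + q\,\div_\mu(pX),
\end{align*}
using $(pX)[q] = p\,X[q]$, which holds by $C^\infty(M)$-linearity of $dq$.

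Integrate both sides over $M$ against $d\mu$. The vector field $pqX$ has compact support, since $p$ (and $q$) do. The divergence theorem (Stokes' theorem applied to the $(n-1)$-form $\iota_{pqX}\,d\mu$, using $d(\iota_Y d\mu) = (\div_\mu Y)\,d\mu$) then gives
\begin{align*}
    \int_M \div_\mu(pqX)\,d\mu = 0 .
\end{align*}
Rearranging yields $\int_M p\,X[q]\,d\mu = -\int_M q\,\div_\mu(pX)\,d\mu$, which is exactly \eqref{eqn:IBP} in the notation \eqref{eqn:inner_L2}.

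The main point needing care is the vanishing of the boundary term. This requires $M$ to be without boundary, or else the support to avoid $\partial M$; this is guaranteed because compact support is taken in the interior and $M$ is implicitly boundaryless. The divergence theorem itself also relies on $\div_\mu Y\,d\mu = \mathcal{L}_Y d\mu = d\,\iota_Y d\mu$ agreeing with the trace definition \eqref{eqn:div}. I will cite this standard fact from \cite{lee2006riemannian} rather than reprove it. If orientability is a concern, I will use the Riemannian density instead, for which the same statement holds.
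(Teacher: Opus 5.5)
Your proof is correct and follows essentially the same route as the paper's: you combine the divergence theorem (via $\mathcal{L}_Y\mu = d\,i_Y\mu$ and Stokes' theorem) for a compactly supported field built from $p$, $q$, $X$ with the product rule $\div_\mu(fY) = f\,\div_\mu Y + Y[f]$. The only difference is cosmetic, and two things favour your version. First, you apply the product rule once to $q\,(pX)$, whereas the paper applies it to $(pq)X$, then uses the Leibniz rule for $\mathcal{L}_X$, then the product rule again to recombine $X[p] + p\,\div_\mu X$ into $\div_\mu(pX)$. Second, you derive the product rule from the trace definition \eqref{eqn:div}, a step the paper takes for granted, and you flag the boundary and orientability assumptions that the paper leaves implicit.
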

\begin{proof}
    See \Cref{sec:IBP}.
\end{proof}
Equation~\eqref{eqn:IBP} shows that the adjoint of the Lie derivative with respect to the $L^2$-inner product is given by the divergence operator.

\begin{theorem}[Fokker-Planck Equation for Stratonovich SDE]\label{thm:FP_Strat}
    Consider the stochastic differential equation \eqref{eqn:SDE} defined in the Stratonovich sense. 
    The corresponding Fokker-Planck equation is given by
    \begin{align}
        \boxed{
        \deriv{p_t}{t} = -\div_\mu(p_tX) + \frac{1}{2}\sum_{i=1}^m \div_\mu( \div_\mu(p_t\sigma_i)\, \sigma_i ).}\label{eqn:FP_Strat}
    \end{align}
\end{theorem}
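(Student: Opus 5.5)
The plan is to compute the adjoint $\mathcal{A}^*$ of the generator in \Cref{thm:generator_Strat} directly and then invoke the abstract Fokker--Planck equation \eqref{eqn:FP_A}. Fix $p, q\in C^\infty_c(M)$ and pair $p$ against each term of \eqref{eqn:A_Strat}, using linearity of the $L^2$-inner product \eqref{eqn:inner_L2}:
\begin{align*}
    \pair{p, \mathcal{A}q} = \pair{p, X[q]} + \frac{1}{2}\sum_{i=1}^m \pair{p, \sigma_i[\sigma_i[q]]}.
\end{align*}
Every term is moved off $q$ by \Cref{lem:IBP}, used once for the drift and twice for each diffusion term.

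For the drift, \Cref{lem:IBP} with the vector field $X$ gives $\pair{p, X[q]} = \pair{-\div_\mu(pX), q}$ at once. For the $i$-th diffusion term, set $r = \sigma_i[q]$; this function is smooth and compactly supported because $q$ is. Applying \Cref{lem:IBP} with the vector field $\sigma_i$ and the pair $(p, r)$ gives
\begin{align*}
    \pair{p, \sigma_i[\sigma_i[q]]} = \pair{-\div_\mu(p\sigma_i), \sigma_i[q]}.
\end{align*}
Next, set $h_i = -\div_\mu(p\sigma_i)\in C^\infty_c(M)$. Its support lies in that of $p$, since the divergence is a local differential operator. Applying \Cref{lem:IBP} again, now to $(h_i, q)$ with the vector field $\sigma_i$, gives
\begin{align*}
    \pair{h_i, \sigma_i[q]} = \pair{-\div_\mu(h_i\sigma_i), q} = \pair{\div_\mu(\div_\mu(p\sigma_i)\,\sigma_i), q}.
\end{align*}
The two minus signs cancel, which produces the positive second-order term in \eqref{eqn:FP_Strat}.

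Collecting the terms yields $\pair{p,\mathcal{A}q} = \pair{\mathcal{A}^*p, q}$ with
\begin{align*}
    \mathcal{A}^* p = -\div_\mu(pX) + \frac{1}{2}\sum_{i=1}^m \div_\mu(\div_\mu(p\sigma_i)\,\sigma_i).
\end{align*}
This holds for all $q\in C^\infty_c(M)$, so the identification is unique by \eqref{eqn:dual}. Substituting $p=p_t$ into \eqref{eqn:FP_A} then gives \eqref{eqn:FP_Strat}.

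The only delicate point is the regularity and support bookkeeping needed to apply \Cref{lem:IBP} twice. The intermediate functions $\sigma_i[q]$ and $\div_\mu(p\sigma_i)$ must lie in $C^\infty_c(M)$, and this follows because Lie derivatives and divergences do not enlarge supports. If $p_t$ is only a density rather than compactly supported, I would argue weakly: take $q$ compactly supported, note that every boundary term in the divergence theorem then vanishes, and read \eqref{eqn:FP_Strat} in the distributional sense.
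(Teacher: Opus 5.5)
Your proposal is correct and follows the paper's proof: you pair $p$ with $\mathcal{A}q$, apply \Cref{lem:IBP} once to the drift term and twice to each $\sigma_i[\sigma_i[q]]$ (with the two sign flips cancelling), identify $\mathcal{A}^*$, and then invoke \eqref{eqn:FP_A}. Your support bookkeeping for the intermediate functions $\sigma_i[q]$ and $\div_\mu(p\sigma_i)$, and your weak reading when $p_t$ is not compactly supported, are useful details the paper leaves implicit.
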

\vspace*{0.3cm}
\begin{proof}
    From \eqref{eqn:dual}, the adjoint of the generator \eqref{eqn:A_Strat} is obtained by
    \begin{align}
        \pair{ \mathcal{A}^*p, q} = \langle p, X[q] + \frac{1}{2}\sum_{i=1}^m \sigma_i[\sigma_i[q]] \rangle, \label{eqn:tmp0}
    \end{align}
    for any $p, q\in C^\infty_c (M)$.
    By repeatedly applying \eqref{eqn:IBP},
    \begin{gather*}
        \pair{p, X[q]} = \pair{ -\div_\mu(pX), q}, \\
        \pair{p, \sigma_i[q]} = \pair{ -\div_\mu(p\sigma_i), q},\\
        \pair{p, \sigma_i[\sigma_i[q]]} = \pair{ \div_\mu( \div_\mu(p\sigma_i)\, \sigma_i ), q}.
    \end{gather*}
    Substituting these into \eqref{eqn:tmp0}, the adjoint operator $\mathcal{A}^*$ is obtained by the right-hand side of \eqref{eqn:FP_Strat}.
    From \eqref{eqn:FP_A}, this shows \eqref{eqn:FP_Strat}.
\end{proof}
The Fokker-Planck equation for a Stratonovich SDE on a manifold has been expressed in local coordinates in, for example, \cite{chirikjian2009stochastic}.
\Cref{thm:FP_Strat} shows that an intrinsic form of the Stratonovich Fokker-Planck equation can be derived elegantly using the integration-by-parts formula \eqref{eqn:IBP}.

\begin{example}[Stratonovich SDE for Brownian Motion]\label{ex:Brownian_Strat}
    Suppose $X = -\frac{1}{2}\sum_{i=1}^n \nabla_{E_i} E_i$ and $\sigma_i = E_i$ for $i \in \{1, \ldots, n\}$.
    Using the algebraic properties of the divergence and the Lie derivative, it follows that both the generator $\mathcal{A}$ in~\eqref{eqn:A_Strat} and its adjoint in~\eqref{eqn:FP_Strat} reduce to one half of the Laplace--Beltrami operator.
    This corresponds to the Brownian motion on the manifold, thereby recovering the result of~\cite[Theorem~4,~(39)]{lee2025brownian}.
\end{example}



\subsection{Fokker-Planck Equation for It\^{o} SDE}\label{sec:FP_Ito}

In this section, we consider a stochastic differential equation on $M$ given by
\begin{align}
    dx = \tilde X(x)\,dt + \sum_{i=1}^m \sigma_i(x)  dW_i, \label{eqn:SDE_Ito}
\end{align}
which is defined according to the It\^{o} sense. 
As in \eqref{eqn:SDE}, $\tilde X, \sigma_1, \ldots, \sigma_m \in \mathfrak{X}(M)$ are smooth vector fields and $W_i$ are independent real-valued Wiener processes. 

We first identify the correspondence between the Stratonovich SDE \eqref{eqn:SDE} and \eqref{eqn:SDE_Ito} as follows.
\begin{theorem}[It\^o--Stratonovich Conversion]\label{thm:Strat_Ito}
    The Stratonovich stochastic differential equation \eqref{eqn:SDE} is equivalent to the It\^o stochastic differential equation \eqref{eqn:SDE_Ito} if the drift vector fields are related by
    \begin{align}
        \tilde X(x) = X(x) + \frac{1}{2} \sum_{i=1}^m \nabla_{\sigma_i(x)} \sigma_i(x). \label{eqn:X_tilde}
    \end{align}
\end{theorem}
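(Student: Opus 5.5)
The plan is to compare the two SDEs through their infinitesimal generators, since two diffusions with the same generator have the same law. The generator of the Stratonovich SDE \eqref{eqn:SDE} is already available from \Cref{thm:generator_Strat}. For the It\^o SDE \eqref{eqn:SDE_Ito} on a manifold, the It\^o differential is only intrinsically meaningful relative to a connection; I will take it relative to the Levi--Civita connection $\nabla$. In that sense, the It\^o formula for $f\in C^\infty(M)$ reads
\[
df(x)=\tilde X[f]\,dt+\sum_{i=1}^m\sigma_i[f]\,dW_i+\frac12\sum_{i=1}^m \mathrm{Hess}_f(\sigma_i,\sigma_i)\,dt .
\]
Taking expectations and the limit $t\to0^+$ then gives the It\^o generator
\[
\tilde{\mathcal A}f=\tilde X[f]+\frac12\sum_{i=1}^m\mathrm{Hess}_f(\sigma_i,\sigma_i).
\]

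The key algebraic step is to rewrite the second-order term of \eqref{eqn:A_Strat} using the definition of the Hessian \eqref{eqn:Hess} with $X=Y=\sigma_i$:
\[
\sigma_i[\sigma_i[f]]=\mathrm{Hess}_f(\sigma_i,\sigma_i)+(\nabla_{\sigma_i}\sigma_i)[f].
\]
Substituting this into \eqref{eqn:A_Strat} gives
\[
\mathcal A f=\Big(X+\frac12\sum_{i=1}^m\nabla_{\sigma_i}\sigma_i\Big)[f]+\frac12\sum_{i=1}^m\mathrm{Hess}_f(\sigma_i,\sigma_i).
\]
This coincides with $\tilde{\mathcal A}f$ for every $f$ exactly when $\tilde X$ is given by \eqref{eqn:X_tilde}. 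The first-order parts must agree because the second-order parts are already identical, and a vector field is determined by its action on all smooth functions.

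The step I expect to be the main obstacle is justifying the intrinsic It\^o formula with the Hessian, that is, explaining why the It\^o correction on $M$ is $\mathrm{Hess}_f$ and not the coordinate second derivative $\partial_j\partial_k f$. I would handle it in local coordinates, which also serves as a consistency check.

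First, the Euclidean Stratonovich-to-It\^o conversion in a chart gives the drift $X^k+\frac12\sigma_i^j\partial_j\sigma_i^k$, together with the coordinate It\^o generator $\frac12\sigma_i^j\sigma_i^k\partial_j\partial_k f$.

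Second, the coordinate Hessian satisfies $\mathrm{Hess}_f(\sigma_i,\sigma_i)=\sigma_i^j\sigma_i^k(\partial_j\partial_kf-\Gamma^l_{jk}\partial_lf)$. Absorbing the Christoffel term into the drift turns $\frac12\sigma_i^j\partial_j\sigma_i^k$ into $\frac12(\nabla_{\sigma_i}\sigma_i)^k=\frac12\big(\sigma_i^j\partial_j\sigma_i^k+\Gamma^k_{jl}\sigma_i^j\sigma_i^l\big)$, which is exactly the correction in \eqref{eqn:X_tilde}.

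Finally, since both generators are expressed through the intrinsic objects $\nabla$ and $\mathrm{Hess}$, the identity is chart-independent and holds globally on $M$.
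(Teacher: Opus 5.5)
Your proof is correct, but it runs in the opposite logical direction from the paper.

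The paper gives no argument here beyond citing \cite[Theorem~1]{lee2025brownian}. It treats \eqref{eqn:X_tilde} as the primary fact. Only afterwards, in the text preceding \Cref{thm:generator_Ito} and in its appendix, does it obtain the It\^o generator $\tilde X[f]+\frac12\sum_i\mathrm{Hess}_f(\sigma_i,\sigma_i)$. It does so by substituting \eqref{eqn:X_tilde} into \eqref{eqn:A_Strat}, using exactly your identity $\sigma_i[\sigma_i[f]]=\mathrm{Hess}_f(\sigma_i,\sigma_i)+(\nabla_{\sigma_i}\sigma_i)[f]$ from \eqref{eqn:Hess}. You instead take the Hessian form of the It\^o generator as input and deduce \eqref{eqn:X_tilde}.

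Given \eqref{eqn:Hess}, the two statements are equivalent, so all the geometric content lies in how \eqref{eqn:SDE_Ito} is defined on $M$. Your route is short and fully intrinsic. However, it is non-circular only because you adopt the Levi-Civita It\^o formula as that definition. You could not quote \Cref{thm:generator_Ito}, because the paper derives it from the very statement you are proving. So state the definition explicitly.

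Your coordinate paragraph, as written, only re-expresses the Stratonovich generator in a chart; it does not independently justify the Hessian formula. It would do so if you define the It\^o SDE in a chart by $dx^k=(\tilde X^k-\frac12\sum_i\Gamma^k_{jl}\sigma_i^j\sigma_i^l)\,dt+\sum_i\sigma_i^k\,dW_i$. This Christoffel correction is what makes $\tilde X$ transform as a vector field, and it is also the limit of geodesic increments $\exp_x(\tilde X\Delta t+\sum_i\sigma_i\Delta W_i)$. Euclidean It\^o's formula then gives $df=\tilde X[f]\,dt+\sum_i\sigma_i[f]\,dW_i+\frac12\sum_i\sigma_i^j\sigma_i^k(\partial_j\partial_k f-\Gamma^l_{jk}\partial_l f)\,dt$, which is your Hessian formula.

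Finally, matching generators yields equivalence in law, via well-posedness of the martingale problem. That suffices for the Fokker--Planck application. For pathwise equivalence, keep the martingale terms instead:
\begin{itemize}
\item the Stratonovich chain rule gives $d\,f(x)=X[f]\,dt+\sum_i\sigma_i[f](x)\circ dW_i$;
\item converting $\sigma_i[f](x)\circ dW_i=\sigma_i[f](x)\,dW_i+\frac12\sigma_i[\sigma_i[f]](x)\,dt$ and applying your identity shows that the Stratonovich solution satisfies the It\^o formula with drift \eqref{eqn:X_tilde} for the same Brownian path.
\end{itemize}
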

\begin{proof}
    See \cite[Theorem 1]{lee2025brownian}.
\end{proof}
The infinitesimal generator of the It\^o SDE \eqref{eqn:SDE_Ito} can be obtained by substituting the conversion formula \eqref{eqn:X_tilde} into the generator of the Stratonovich SDE \eqref{eqn:A_Strat}~\cite{lee2025brownian}.
Alternatively, we present a formulation based on the \emph{diffusion tensor field}, which yields a more interpretable and compact intrinsic expression of the It\^o Fokker-Planck equation on a manifold.

\begin{theorem}[Generator of It\^{o} SDE]\label{thm:generator_Ito}
    The infinitesimal generator $\mathcal{A}:C^\infty(M)\rightarrow C^\infty(M)$ of the It\^{o} stochastic differential equation \eqref{eqn:SDE_Ito} on $f\in C^\infty(M)$ is given by
    \begin{align}
        \boxed{
    \mathcal{A} f = \tilde X[f] + \frac{1}{2} ( D : \mathrm{Hess}_f), \label{eqn:A_Ito_D}}
    \end{align}
    where $D$ is a $(2,0)$ symmetric tensor field on $M$, referred to as \textit{diffusion tensor field}, defined by
    \begin{align}
        D = \sum_{i=1}^m \sigma_i \otimes \sigma_i. \label{eqn:D}
    \end{align}
    In \eqref{eqn:A_Ito_D}, $\mathrm{Hess}_f:\mathfrak{X}(M)\times\mathfrak{X}(M)\rightarrow\Re$ is considered as a $(0,2)$ tensor field, 
    and $:$ denotes the contraction of a pair of tensor fields introduced at \eqref{eqn:contraction_2}.
\end{theorem}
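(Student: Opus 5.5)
The plan is to reduce the It\^o generator to the Stratonovich one, which is already available. By \Cref{thm:Strat_Ito}, the It\^o SDE \eqref{eqn:SDE_Ito} is the same process as the Stratonovich SDE \eqref{eqn:SDE} with drift
\[
X=\tilde X-\tfrac12\sum_{i=1}^m\nabla_{\sigma_i}\sigma_i .
\]
Since the generator depends only on the law of the process, \Cref{thm:generator_Strat} gives
\begin{align*}
\mathcal{A}f=\tilde X[f]-\frac12\sum_{i=1}^m(\nabla_{\sigma_i}\sigma_i)[f]+\frac12\sum_{i=1}^m\sigma_i[\sigma_i[f]] .
\end{align*}
Grouping the last two sums term by term and comparing with \eqref{eqn:Hess} with $X=Y=\sigma_i$, each summand is
\[
\sigma_i[\sigma_i[f]]-(\nabla_{\sigma_i}\sigma_i)[f]=\mathrm{Hess}_f(\sigma_i,\sigma_i).
\]
Hence
\[
\mathcal{A}f=\tilde X[f]+\frac12\sum_{i=1}^m\mathrm{Hess}_f(\sigma_i,\sigma_i).
\]

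The remaining step is purely algebraic: we must identify $\sum_i\mathrm{Hess}_f(\sigma_i,\sigma_i)$ with the contraction $D:\mathrm{Hess}_f$. In local coordinates, write $\sigma_i=\sigma_i^a\partial_a$. Then by \eqref{eqn:tensor_product_eval} the components of $\sigma_i\otimes\sigma_i$ are $\sigma_i^a\sigma_i^b$, so $D^{ab}=\sum_i\sigma_i^a\sigma_i^b$. By $C^\infty(M)$-bilinearity of the $(0,2)$ tensor $\mathrm{Hess}_f$ (via \eqref{eqn:Hess_cov}), $\mathrm{Hess}_f(\sigma_i,\sigma_i)=\sigma_i^a\sigma_i^b(\mathrm{Hess}_f)_{ab}$. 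Summing over $i$ and using \eqref{eqn:contraction_2} gives $D^{ab}(\mathrm{Hess}_f)_{ab}=D:\mathrm{Hess}_f$, which is coordinate independent because the contraction is. Symmetry of $D$ is immediate from its definition, and it is consistent with the symmetry of the Hessian in \eqref{eqn:Hess}, so the order of contraction does not matter.

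I expect the main subtlety to be the first step rather than the algebra. One must justify that substituting the conversion formula \eqref{eqn:X_tilde} into \eqref{eqn:A_Strat} gives the generator of \eqref{eqn:SDE_Ito}. This holds because \Cref{thm:Strat_Ito} asserts that the two SDEs define the same process, and the generator is defined from expectations along the process alone. A second point to check is the tensoriality of $\mathrm{Hess}_f$ in its arguments. The expression $X[Y[f]]$ alone is not $C^\infty$-linear in $Y$, but the combination in \eqref{eqn:Hess} is, and that is what allows pulling out the components $\sigma_i^a\sigma_i^b$. Given \eqref{eqn:Hess_cov}, which identifies the Hessian with the tensor $\nabla(\nabla f)$, this is immediate.
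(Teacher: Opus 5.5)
Your proposal is correct and follows the paper's proof: the paper likewise obtains $\mathcal{A}f=\tilde X[f]+\frac12\sum_{i=1}^m\mathrm{Hess}_f(\sigma_i,\sigma_i)$ from \eqref{eqn:A_Strat} and \eqref{eqn:X_tilde}, though it cites prior work instead of writing out the term-by-term grouping you give. It then identifies $\sum_{i=1}^m\mathrm{Hess}_f(\sigma_i,\sigma_i)=D:\mathrm{Hess}_f$ by bilinearity of the Hessian, expanding in an orthonormal frame with its dual coframe rather than in your coordinate frame, which makes no difference.
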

\begin{proof}
    See \Cref{sec:generator_Ito}.
\end{proof}




\begin{remark}\label{rem:D}
    The diffusion tensor field is a smooth, symmetric, positive semi-definite $(0,2)$-tensor that intrinsically characterizes how stochastic perturbations spread over a manifold. 
    Unlike the conventional diffusion matrix defined in Euclidean coordinates, this tensorial formulation generalizes diffusion to curved spaces by expressing it as a metric-compatible tensor field on the tangent bundle.
    It provides a unified geometric interpretation of diffusion as the local covariance of noise vector fields, consistent with the manifold’s curvature and metric structure. 
    Physically, it captures the intrinsic anisotropy and rate of probability propagation constrained by the geometry. 
    In the isotropic case, the diffusion part of the generator reduces to the Laplace-Beltrami operator, corresponding to Brownian motion (\Cref{ex:Brownian_Ito}).
\end{remark}

Next, the following lemma is repeatedly used in the subsequent development of the Fokker-Planck equation in the It\^{o} form. 
\begin{lemma}\label{lem:IBP_tensor}
    Let $T\in\mathcal{T}^{(r,s)(M)}$ and $S\in\mathcal{T}^{(s+1, r)(M)}$ be tensor fields with compact support. 
    Then, 
    \begin{align}
        \langle \nabla T, S \rangle_{\mathcal{T}} = \langle T, - \div_\mu (S) \rangle_{\mathcal{T}},\label{eqn:IBP_tensor}
    \end{align}
    where $\pair{\cdot, \cdot}_{\mathcal{T}}$ is the $L^2$-inner product of tensor fields given by \eqref{eqn:inner_L2_tensor}. 
\end{lemma}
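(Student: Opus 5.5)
The plan is to reduce \eqref{eqn:IBP_tensor} to the scalar divergence theorem by writing the full contraction $\nabla T : S$ as the divergence of a suitable vector field plus a correction term equal to $T:\div_\mu(S)$.

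\textbf{Step 1: set up the contractions and the vector field.} Work in a local orthonormal frame $\{E_i\}$ with dual coframe $\{E^i\}$. The tensor $\nabla T$ has type $(r,s+1)$, and its last covariant slot (the differentiation slot) is paired with the last contravariant slot of $S$, which is exactly the slot that $\div_\mu$ contracts in \eqref{eqn:div_tensor}. Define the vector field $Z$ by contracting $T$ fully against $S$, leaving free only the last contravariant slot of $S$:
\begin{align*}
    Z = \sum_{k} (T : S(\cdots, E^k))\, E_k .
\end{align*}
Equivalently, $Z$ is the contraction of $T\otimes S$ over all indices except one. This is a globally defined, compactly supported vector field, since contraction is independent of the choice of frame.

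\textbf{Step 2: compute the divergence of $Z$.} By \eqref{eqn:div_tensor}, $\div_\mu Z = \sum_i E^i(\nabla_{E_i}Z)$. Two facts about the Levi--Civita connection are needed here:
\begin{itemize}
    \item[(a)] $\nabla$ commutes with contractions, which follows from \eqref{eqn:covariant_derivative_tensor} because the correction terms cancel in pairs;
    \item[(b)] $\nabla$ satisfies the Leibniz rule \eqref{eqn:cov_tensor_product}.
\end{itemize}
Applying both to $Z = \mathrm{contr}(T\otimes S)$ gives
\begin{align*}
    \div_\mu Z = \sum_i \big[(\nabla_{E_i}T) : S(\cdots,E^i) + T : (\nabla_{E_i}S)(\cdots,E^i)\big].
\end{align*}
The first sum is $\nabla T : S$ under the pairing described in Step 1. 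The second sum is $T:\div_\mu S$ by definition \eqref{eqn:div_tensor}. Hence
\begin{align*}
    \div_\mu Z = \nabla T : S + T : \div_\mu S .
\end{align*}

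\textbf{Step 3: integrate.} Since $Z$ has compact support, the divergence theorem gives $\int_M \div_\mu Z \, d\mu = 0$. This is the same fact underlying \Cref{lem:IBP}, which is the case $r=s=0$ with $S=pX$. Integrating the identity of Step 2 over $M$ then yields \eqref{eqn:IBP_tensor}.

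\textbf{Main obstacle.} The main difficulty is the index bookkeeping in Step 2. One must check that the slot pairing implicit in $\langle \nabla T, S\rangle_{\mathcal{T}}$ matches the pairing in $\langle T, \div_\mu S\rangle_{\mathcal{T}}$: the derivative slot of $\nabla T$ must go to the last contravariant slot of $S$, and the remaining slots must be paired identically on both sides. One must also justify that $\nabla$ commutes with contraction. I would verify this by evaluating everything on frame elements, using $\nabla_X(E^j(E_k))=0$, so that the terms involving $\nabla_X E^j$ and $\nabla_X E_k$ cancel, rather than by reasoning abstractly. I would also fix the convention that the contraction ``$:$'' in \eqref{eqn:inner_L2_tensor} pairs slots in order, and state that the lemma holds under that convention.
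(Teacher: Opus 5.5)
Your proposal is correct and is essentially the paper's proof. The paper likewise forms the compactly supported vector field $V^k = T^I_J S^{Jk}_I$, obtained by contracting everything except the last contravariant slot of $S$. It then uses the product rule to get $\div_\mu V = \nabla T : S + T : \div_\mu S$ and integrates with the divergence theorem. The only difference is presentational: the paper works in index notation and uses the commutation of $\nabla$ with contraction implicitly, whereas you plan to verify that step explicitly on frame elements.
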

\begin{proof}
    See \Cref{sec:IBP_tensor}.
\end{proof}
This lemma generalizes \Cref{lem:IBP}, showing that the adjoint of the covariant derivative of a tensor field is the negative divergence. 
Equivalently, it extends the classical integration-by-parts formula to arbitrary tensor fields on $M$.

\begin{theorem}[Fokker-Planck Equation for It\^o SDE]\label{thm:FP_Ito}
    Consider the stochastic differential equation \eqref{eqn:SDE_Ito} defined in the It\^o sense. 
    The corresponding Fokker-Planck equation is
    \begin{align}
        \boxed{
        \frac{\partial p_t}{\partial t} 
        = -\div_\mu(p_t\tilde X) + \frac{1}{2}\,\div_\mu(\div_\mu(p_t D)),}
        \label{eqn:FP_Ito}
    \end{align}
    where $D$ is the diffusion tensor field defined in \eqref{eqn:D}.
    The inner divergence in \eqref{eqn:FP_Ito} is taken as in \eqref{eqn:div_tensor} and acts on the $(2,0)$ tensor field $p_t D$, 
    while the outer divergence follows \eqref{eqn:div} and acts on the resulting $(1,0)$ tensor field. 
\end{theorem}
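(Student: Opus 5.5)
The plan mirrors the proof of \Cref{thm:FP_Strat}: compute the adjoint of the generator \eqref{eqn:A_Ito_D} from \Cref{thm:generator_Ito} via \eqref{eqn:dual}, then invoke \eqref{eqn:FP_A}. Fix $p,q\in C^\infty_c(M)$ and write
\begin{align*}
    \pair{\mathcal{A}^*p,q} = \pair{p,\tilde X[q]} + \tfrac{1}{2}\pair{p, D:\mathrm{Hess}_q}.
\end{align*}
The drift term is handled directly by \Cref{lem:IBP}, which gives $\pair{p,\tilde X[q]} = \pair{-\div_\mu(p\tilde X),q}$. This is the first term on the right-hand side of \eqref{eqn:FP_Ito}.

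For the diffusion term, I first rewrite the integrand as a contraction of tensor fields of opposite type. Since $p$ is a scalar, $p\,(D:\mathrm{Hess}_q) = (pD):\mathrm{Hess}_q$. By \eqref{eqn:Hess_cov}, $\mathrm{Hess}_q = \nabla(\nabla q)$, so
\begin{align*}
    \pair{p, D:\mathrm{Hess}_q} = \pair{\nabla(\nabla q),\, pD}_{\mathcal{T}}.
\end{align*}
Here $\nabla q$ is a $(0,1)$ tensor and $pD$ is a $(2,0)$ tensor, so \Cref{lem:IBP_tensor} applies with $T=\nabla q$ ($r=0$, $s=1$) and $S=pD$ (type $(s+1,r)=(2,0)$). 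Both have compact support because $p$ and $q$ do. This yields $\pair{\nabla q, -\div_\mu(pD)}_{\mathcal{T}}$, where $\div_\mu(pD)$ is the $(1,0)$ tensor given by \eqref{eqn:div_tensor}.

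I then apply \Cref{lem:IBP_tensor} a second time, now with $T=q$ ($r=s=0$) and $S=-\div_\mu(pD)$ of type $(1,0)$. Alternatively, since $(\nabla q):Y = Y[q]$ by \eqref{eqn:nabla_f_X}, I can apply \Cref{lem:IBP} with the vector field $Y=-\div_\mu(pD)$. Either way the result is $\pair{q, \div_\mu(\div_\mu(pD))}$, with the outer divergence being the ordinary one \eqref{eqn:div}. Collecting both terms identifies $\mathcal{A}^*p$ as the right-hand side of \eqref{eqn:FP_Ito}, and \eqref{eqn:FP_A} completes the proof.

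The main point needing care is the first integration by parts: the contraction must be matched correctly with the slot convention of \eqref{eqn:div_tensor}. The divergence contracts the \emph{last} contravariant index of $S$ with the derivative slot. In $\nabla(\nabla q)$, the outer derivative slot is the second argument, and that slot pairs with the second index of $pD$. The first index pairs with $\nabla q$.

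The symmetry of $D$, inherited from \eqref{eqn:D}, makes the choice of contracted index irrelevant. The symmetry of the Hessian likewise ensures that the order of the arguments in $\mathrm{Hess}_q$ versus $\nabla(\nabla q)$ does not matter. I would note both facts explicitly so that the identification with \Cref{lem:IBP_tensor} is unambiguous.
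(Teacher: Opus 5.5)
Your proposal is correct and follows the paper's proof essentially step for step: \Cref{lem:IBP} handles the drift, and the diffusion term is rewritten as $\langle pD, \nabla(\nabla q)\rangle_{\mathcal{T}}$ via \eqref{eqn:Hess_cov} and then integrated by parts with \Cref{lem:IBP_tensor}. The only difference is that you spell out the two successive applications of \Cref{lem:IBP_tensor} (first with $T=\nabla q$, $S=pD$, then with $T=q$, $S=-\div_\mu(pD)$) and the slot and symmetry bookkeeping, which the paper compresses into a single step.
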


\begin{proof}
    From \eqref{eqn:dual}, the dual of the generator \eqref{eqn:A_Strat} is obtained by
    \begin{align}
        \pair{ \mathcal{A}^*p, q} = \langle p, \tilde X[q] + \frac{1}{2} ( D: \mathrm{Hess}_q ) \rangle, \label{eqn:tmp1}
    \end{align}
    for any $p, q\in C^\infty_c (M)$.

    Using \eqref{eqn:IBP}, the adjoint of the drift is given by
    \begin{align}
        \langle p, \tilde X[q] \rangle = \langle -\div_\mu(p\tilde X), q \rangle. \label{eqn:tmp2}
    \end{align}
    Next, the diffusion part is rearranged into
    \begin{align*}
        \langle p, D:\mathrm{Hess}_q \rangle = \langle pD, \mathrm{Hess}_q \rangle_{\mathcal{T}}
         = \langle pD, \nabla(\nabla q) \rangle_{\mathcal{T}},
    \end{align*}
    where the first equality is from the formulation of the $L^2$-inner product of tensor fields given by \eqref{eqn:inner_L2_tensor}, and the last equality is due to \eqref{eqn:Hess_cov}.
    Applying \eqref{eqn:IBP_tensor}, this is rewritten as
    \begin{align}
        \langle p, D:\mathrm{Hess}_q \rangle & = \langle \div_\mu(\div_\mu(pD)), q \rangle_{\mathcal{T}} \nonumber\\
                                             & = \langle \div_\mu(\div_\mu(pD)), q \rangle,\label{eqn:tmp3}
    \end{align}
    where the $L^2$-inner product for tensor fields of the type $(2,0)$ is reduced to the usual $L^2$-inner product for the scalar fields.

    Substituting \eqref{eqn:tmp2} and \eqref{eqn:tmp3} into \eqref{eqn:tmp1} yields
\begin{align*}
    \pair{ \mathcal{A}^*p, q} &= \left\langle -\div_\mu(p\tilde X) + \frac{1}{2} \div_\mu(\div_\mu(pD)), q \right\rangle.
\end{align*}
Since this must hold for all test functions $q \in C^\infty_c(M)$, we identify the adjoint operation $\mathcal{A}^*$ on $p_t$ as \eqref{eqn:FP_Ito}.
\end{proof}
\begin{example}[It\^o SDE for Brownian Motion]\label{ex:Brownian_Ito}
    Suppose $\tilde X = 0$, and $\sigma_i = E_i$ for $i \in \{1, \ldots, n\}$ such that $D=\sum_{i=1}^n E_i\otimes E_i$.
    Using the algebraic properties of the divergence and the Lie derivative, it follows that both the generator $\mathcal{A}$ in~\eqref{eqn:A_Ito_D} and its adjoint in~\eqref{eqn:FP_Ito} reduce to one half of the Laplace--Beltrami operator.
    This corresponds to the Brownian motion on the manifold, thereby recovering the result of~\cite[Theorem~4,~(40)]{lee2025brownian}.
\end{example}

The Fokker-Planck equation for the It\^o stochastic differential equation on a manifold was originally studied in the seminal works of~\cite{ito1950stochastic,ito1953stochastic,yosida1949integration}. 
Those formulations, however, were expressed in local coordinates and depended on the specific choice of the Riemannian metric. 
\Cref{thm:FP_Ito} shows that a coordinate-free, intrinsic form of the It\^o Fokker-Planck equation can be derived compactly by introducing the diffusion tensor field. 
This formulation also provides a geometric interpretation of diffusion on manifolds as follows. 
\begin{remark}\label{rem:cont}
    The diffusion term of \eqref{eqn:FP_Ito}, namely $\div_\mu(\div_\mu(p_t D))$, recovers the 
    ``double divergence form'' (\cite{bogachev2022fokker}) that appears in the Fokker--Planck equation on Euclidean space.
    This term represents the \emph{divergence of the diffusive flux} generated by the tensor field $D$ acting on the scalar field $p_t$.
    Geometrically, it measures the total curvature-adjusted spreading of $p_t$ across the manifold according to the anisotropic 
    diffusion geometry defined by $D$; when $D = \sum_{i=1}^n E_i \otimes E_i$, it reduces to the Laplace--Beltrami operator as shown in \Cref{ex:Brownian_Ito}.

    Physically, the inner divergence $\div_\mu(p_t D)$ corresponds to the \emph{diffusive flux}---the net flow of probability induced by random motion---and has the same physical dimension as the \emph{drift flux} $p_t \tilde X$ generated by the vector field.
    The outer divergence then gives the \emph{net rate of accumulation or depletion} at a point due to that flux, describing how $p_t$ redistributes through a medium with spatially varying anisotropic diffusivity---an 
    essential component of the Fokker--Planck equation governing diffusion processes.
    In short, \eqref{eqn:FP_Ito} can be interpreted as a \emph{continuity equation} on a manifold, expressing that the rate of change of the density at a point equals the net inflow of probability at the point, caused by the flux $p_t\tilde X - \tfrac12 \div_\mu(p_t D)$. 
    \Cref{thm:FP_Ito} generalizes this fundamental interpretation of the Fokker-Planck equation to Riemannian manifolds in an intrinsic manner.
\end{remark}



\section{Example}

In this section, we derive the infinitesimal generator and the Fokker-Planck equation for a stochastic process on the two-sphere $\Sph^2=\{x\in\Re^3\,|\, \|x\|x=1\}$, and discuss how it can be utilized for Bayesian estimation. 

\subsection{Two-Sphere}\label{sec:Two-Sphere}

\paragraph{Riemannian Metric}

A point $x\in\Sph^2$ is parameterized by $ x = \begin{bmatrix}\cos\phi \sin\theta, & \sin\phi \sin\theta, & \cos\theta \end{bmatrix},$ where $\theta\in[0,\pi]$ is the co-latitude, and $\phi \in [0,2\pi)$ is the longitude. 

The tangent vectors are given by
\begin{align*}
    \deriv{x}{\theta} & = \begin{bmatrix} \cos\phi \cos\theta & \sin\phi \cos\theta & -\sin\theta\end{bmatrix}^T,\\
    \deriv{x}{\phi} & = \begin{bmatrix} -\sin\phi \sin\theta & \cos\phi \sin\theta & 0\end{bmatrix}^T.
\end{align*}
The resulting components of the metric tensor induced by the standard metric on $\Re^3$ are
\begin{align*}
    \g_{\theta\theta} = 1, \quad
    \g_{\theta\phi}  = 0, \quad
    \g_{\phi\phi} = \sin^2\theta.
\end{align*}
The Christoffel symbols are given by
\begin{align*}
    \Gamma^\theta_{\phi\phi} = - \sin\theta\cos\theta,\quad
    \Gamma^\phi_{\theta\phi} = \Gamma^\phi_{\phi\theta} = \cot\theta,
\end{align*}
and all others vanish. 

\paragraph{Orthonormal Frame}

Using the metric, we can define the following orthonormal frame:
\begin{gather}
    E_\theta  = \deriv{}{\theta},\quad
    E_\phi  = \frac{1}{\sin\theta} \deriv{}{\phi}. \label{eqn:E_Sph}
\end{gather}
It is straightforward to verify $\g(E_i, E_j) = \delta_{ij}$.  

In the subsequent development, we will repeatedly use the covariant derivatives of the orthonormal frame, given by
\begin{gather}
    \nabla_{E_\theta} E_\theta = 0,\quad
    \nabla_{E_\theta} E_\phi = 0,\\
    \nabla_{E_\phi} E_\theta = \cot\theta E_\phi,\quad
    \nabla_{E_\phi} E_\phi = -\cot\theta E_\theta.
\end{gather}

\subsection{Stratonovich SDE}

We choose the drift vector field and the diffusion vector fields using the orthonormal frame as follows. 
The drift vector field is chosen as
\begin{align}
    X(\theta,\phi) = X^\theta(\theta,\phi) E_\theta + X^\phi(\theta,\phi) E_\phi, \label{eqn:X_Sph}
\end{align}
for $X^\theta,X^\phi\in C^\infty(\Sph^2)$. 
And the diffusion vector fields are
\begin{align}
    \sigma_1(\theta,\phi) = \sigma^\theta E_\theta,\quad
    \sigma_2(\theta,\phi) = \sigma^\phi E_\phi, \label{eqn:sigma_Sph}
\end{align}
for constants $\sigma^\theta,\sigma^\phi \in\Re$. 
The resulting SDE is
\begin{align}
    d x(\theta,\phi) = X(\theta,\phi) dt + \sum_{i=1}^2 \sigma_i(\theta,\phi) \circ dW_i. \label{eqn:SDE_Sph}
\end{align}

\paragraph{Infinitesimal Generator}

According to \eqref{eqn:A_Strat}, for $f\in C^\infty(\Sph^2)$, the infinitesimal generator is given by
\begin{align*}
    \mathcal{A} f & = X^\theta E_\theta[f] + X^\phi E_\phi[f]\\
    & \quad + \frac12 \left( (\sigma^\theta)^2 E_\theta[E_\theta[f]]+ (\sigma^\phi)^2 E_\phi[E_\phi[f]] \right).
\end{align*}
Using \eqref{eqn:E_Sph}, this is rearranged into
\begin{empheq}[box=\fbox]{align}
    \mathcal{A} f & = X^\theta \deriv{f}{\theta} + \frac{1}{\sin\theta} X^\phi \deriv{f}{\phi} \nonumber \\
    & \quad + \frac12 \left( (\sigma^\theta)^2 \frac{\partial^2 f}{\partial \theta^2}
    + \frac{(\sigma^\phi)^2}{\sin^2 \theta} \frac{\partial^2 f}{\partial \phi^2} \right). \label{eqn:A_Strat_Sph}
\end{empheq}

\paragraph{Fokker-Planck Equation}

From \eqref{eqn:div}, the divergence of $X$ can be obtained as
\begin{align}
    \div_\mu X & = E^\theta (\nabla_{E_\theta} X) + E^\phi ( \nabla_{E_\phi} X).
\end{align}
Using the Leibniz rule and the covariant derivatives of the frame, we can show
\begin{align*}
    E^\theta(\nabla_{E_\theta} X) & = (\partial_\theta X^\theta) E^\theta(E_\theta) + X^\theta E^\theta(\nabla_{E_\theta} E_\theta) \\
                        & \quad + (\partial_\phi X^\phi) E^\theta(E_\phi) + X^\phi E^\theta(\nabla_{E_\theta} E_\phi)\\
                        & = \deriv{X^\theta}{\theta}.
\end{align*}
Similarly, one can show
\begin{align*}
E^\phi(\nabla_{E_\phi} X) = X^\theta \cot\theta + \frac{1}{\sin\theta} \frac{\partial X^\phi}{\partial \phi},
\end{align*}
so that the divergence on $\Sph^2$ is given by
\begin{align}
    \div_\mu X = \frac{1}{\sin\theta} \frac{\partial(\sin\theta X^\theta)}{\partial \theta}  + \frac{1}{\sin\theta} \frac{\partial X^\phi}{\partial \phi}.\label{eqn:div_Sph}
\end{align}
Using \eqref{eqn:div_Sph} repeatedly, the Fokker-Planck equation in the Stratonovich form \eqref{eqn:FP_Strat}  on the two-sphere is
\begin{empheq}[box=\fbox]{align}
    \frac{\partial p_t}{\partial t} & = -\frac{1}{\sin\theta} \left( \frac{\partial (p_t \sin\theta X^\theta)}{\partial \theta} + \frac{\partial (p_t X^\phi)}{\partial \phi} \right) \nonumber \\
                                    &\quad + \frac{(\sigma^\theta)^2}{2\sin\theta} \frac{\partial^2 (p_t \sin\theta)}{\partial \theta^2} + \frac{(\sigma^\phi)^2}{2\sin^2\theta} \frac{\partial^2 p_t}{\partial \phi^2}. \label{eqn:FP_Strat_Sph}
\end{empheq}


\subsection{It\^{o} SDE}
Next, we consider the It\^{o} formulation. 
Let $\tilde X\in\mathfrak{X}(\Sph^2)$ be a vector field, written as
\begin{align}
    \tilde X(\theta,\phi) & = \tilde X^\theta(\theta,\phi) E_\theta + \tilde X^\phi(\theta,\phi) E_\phi.
\end{align}
And the diffusion fields are given by \eqref{eqn:sigma_Sph}.
The corresponding It\^{o} SDE is
\begin{align}
    d x(\theta,\phi) = \tilde X(\theta,\phi) dt + \sum_{i=1}^2\sigma_i(\theta,\phi) dW_i. \label{eqn:SDE_Sph_Ito}
\end{align}

Using the covariant derivatives of the frame, we can show
\begin{align*}
    \nabla_{\sigma_1}\sigma_1 = 0, \quad
    \nabla_{\sigma_2}\sigma_2 = -(\sigma^\phi)^2 \cot\theta E_\theta.
\end{align*}
According to \Cref{thm:Strat_Ito}, the It\^{o} formulation \eqref{eqn:SDE_Sph_Ito} equivalent to the Stratonovich SDE \eqref{eqn:SDE_Sph} if
\begin{gather}
    \tilde X^\theta = X^\theta - \frac{1}{2}(\sigma^\phi)^2 \cot\theta ,
    \quad \tilde X^\phi = X^\phi. \label{eqn:Strat_Ito_Sph}
\end{gather}

\paragraph{Infinitesimal Generator}
The diffusion tensor is
\begin{align}
    D =  (\sigma^\theta)^2 E_\theta\otimes E_\theta + (\sigma^\phi)^2 E_\phi\otimes E_\phi,
\end{align}
such that $D^{\theta\theta} = (\sigma^\theta)^2$, $D^{\theta\phi}=D^{\phi\theta}=0$, and $D^{\phi\phi} = (\sigma^\phi)^2$ in coordinates.
Next, from \eqref{eqn:Hess}, the first component of the Hessian is
\begin{align*}
    \mathrm{Hess}_f (E_\theta, E_\theta) & = E_\theta[E_\theta[f]] - (\nabla_{E_\theta} E_\theta) [f] = \frac{\partial^2 f}{\partial\theta^2}.
\end{align*}
Similarly, we can show the other components as
\begin{align*}
    \mathrm{Hess}_f (E_\phi, E_\phi) &= \frac{1}{\sin^2\theta} \frac{\partial^2 f}{\partial \phi^2} + \cot\theta \frac{\partial f}{\partial \theta}, \\
    \mathrm{Hess}_f (E_\theta, E_\phi) &= \frac{1}{\sin\theta} \frac{\partial^2 f}{\partial \theta \partial \phi} - \frac{\cot\theta}{\sin\theta} \frac{\partial f}{\partial \phi}, \\
    \mathrm{Hess}_f (E_\phi, E_\theta) &= \mathrm{Hess}_f (E_\theta, E_\phi).
\end{align*}
Substituting these into \eqref{eqn:A_Ito_D}, and using and \eqref{eqn:div_Sph}, the generator for the It\^{o} SDE is given by
\begin{empheq}[box=\fbox]{align*}
    & \mathcal{A} f = \tilde X^\theta \deriv{f}{\theta} + \frac{1}{\sin\theta} \tilde X^\phi \deriv{f}{\phi} \nonumber \\
    & + \frac12 \left( (\sigma^\theta)^2 \frac{\partial^2 f}{\partial\theta^2} +(\sigma^\phi)^2 \left(\frac{1}{\sin^2\theta} \frac{\partial^2 f}{\partial \phi^2} + \cot\theta \frac{\partial f}{\partial \theta}\right)\right).
\end{empheq}
Substituting \eqref{eqn:Strat_Ito_Sph}, we can verify that the above is equivalent to the generator in the Stratonovich form given by \eqref{eqn:A_Strat_Sph}.

\paragraph{Fokker-Planck Equation}

In \eqref{eqn:FP_Ito}, the divergence in the drift contribution and the outer divergence in the diffusion part can be obtained by \eqref{eqn:div_Sph}.
Thus, we compute the inner divergence acting on the $(2,0)$ tensor $p_t D$. 
From \eqref{eqn:div_tensor}, $\div_\mu (p_t D)$ is the $(1,0)$ tensor obtained by
\begin{align}
    (\div_\mu (p_tD))(\alpha) & = (\nabla_{E_\theta} (p_tD) )(\alpha, E^\theta)\nonumber\\
                              & \quad + (\nabla_{E_\phi} (p_tD))(\alpha, E^\phi), \label{eqn:div_pD_Sph}
\end{align}
for $\alpha\in\Omega^1(M)$.
When $\alpha= E^\theta$, the first term on the right-hand side is obtained from \eqref{eqn:covariant_derivative_tensor} as
\begin{align*}
    (\nabla_{E_\theta} (p_t D)) & (E^\theta, E^\theta)  = (E_\theta[p_t]) D^{\theta\theta} 
    + p_t (\nabla_{E_\theta} D)(E^\theta, E^\theta)\\
                                & = (\sigma^\theta)^2 \deriv{p_t}{\theta}  + p_t \left(E_\theta[D^{\theta\theta}] - 2 D(\nabla_{E_\theta} E^\theta, E_\theta)\right).
\end{align*}
But, $E_\theta[D^{\theta\theta}] = 0$, and $\nabla_{E_\theta} E^\theta =0$ from \eqref{eqn:covariant_derivative_tensor}.
Thus, 
\begin{align*}
    (\nabla_{E_\theta} (p_t D)) (E^\theta, E^\theta) = (\sigma^\theta)^2 \deriv{p_t}{\theta}.
\end{align*}
Similarly, we can obtain the remaining terms of \eqref{eqn:div_pD_Sph} when $\alpha = E^\theta, E^\phi$ as
\begin{align*}
    (\nabla_{E_\theta} (p_t D)) (E^\phi, E^\theta) &= 0, \\
    (\nabla_{E_\phi} (p_t D)) (E^\theta, E^\phi) &= p_t \cot\theta ((\sigma^\theta)^2 - (\sigma^\phi)^2), \\
    (\nabla_{E_\phi} (p_t D)) (E^\phi, E^\phi) &= \frac{(\sigma^\phi)^2}{\sin\theta} \frac{\partial p_t}{\partial \phi}.
\end{align*}
Substituting these back to \eqref{eqn:div_pD_Sph}, the inner divergence of \eqref{eqn:FP_Ito} can be written as $\div_\mu(p_t D) = V = V^\theta E_\theta + V^\phi E_\phi \in \mathfrak{X}(M)$, where
\begin{align}
    V^\theta & = (\sigma^\theta)^2 \deriv{p_t}{\theta} + p_t \cot\theta ((\sigma^\theta)^2 - (\sigma^\phi)^2),\\
    V^\phi & = \frac{(\sigma^\phi)^2}{\sin\theta} \frac{\partial p_t}{\partial \phi}.
\end{align}

The remaining drift term and the outer divergence of \eqref{eqn:FP_Ito} can be obtained by repeatedly applying \eqref{eqn:div_Sph}.
The resulting It\^{o} Fokker-Planck equation is given by
\begin{empheq}[box=\fbox]{align}
    \frac{\partial p_t}{\partial t} & = -\frac{1}{\sin\theta} \left( \frac{\partial (p_t \sin\theta \tilde X^\theta)}{\partial \theta} + \frac{\partial (p_t \tilde X^\phi)}{\partial \phi} \right) \nonumber \\
                                    & \quad + \frac{(\sigma^\theta)^2}{2\sin\theta} \left( \frac{\partial(\sin\theta \partial_\theta p_t)}{\partial \theta}  + \frac{\partial (p_t \cos\theta)}{\partial \theta} \right) \nonumber \\
                                    & \quad + \frac{(\sigma^\phi)^2}{2\sin\theta} \left( \frac{1}{\sin\theta}\frac{\partial^2 p_t}{\partial\phi^2}  -  \frac{\partial (p_t \cos\theta)}{\partial \theta} \right). \label{eqn:FP_Ito_Sph}
\end{empheq}
As an ultimate verification, one can show that substituting the Stratonovich-It\^{o} conversion \eqref{eqn:Strat_Ito_Sph} into \eqref{eqn:FP_Ito_Sph}, we obtain the Stratonovich Fokker-Planck equation \eqref{eqn:FP_Strat_Sph}.

\EditTL{
\subsection{Bayesian Estimation}

The Fokker--Planck equation~\eqref{eqn:FP_Strat_Sph} or~\eqref{eqn:FP_Ito_Sph} can be utilized to develop a Bayesian estimator on the two-sphere.  
Suppose that the measurement model is given by
\begin{equation}
    z = Z(x, v),
\end{equation}
where \( Z: \Sph^2 \times \Re^q \rightarrow \Re^m \) and \( v \in \Re^q \) denotes the measurement noise.  
We assume that the measurement likelihood, namely the conditional probability density \( p(z | x) \) of the measurement \( z \) given the state \( x \in \Sph^2 \), is available.  

Let the initial distribution be \( p_0(x) \), and suppose that a measurement becomes available at \( t = t_1 \).  
The prior distribution can be propagated via the Fokker--Planck equation~\eqref{eqn:FP_Strat_Sph} or~\eqref{eqn:FP_Ito_Sph} to obtain the predicted density \( p_1(x) \), which is then updated by the measurement according to Bayes' rule:
\begin{equation}
    p_1(x | z) \propto p(z | x)\, p_1(x),
\end{equation}
yielding the posterior distribution of the state at \( t = t_1 \) conditioned on the measurement.  
This procedure can be applied recursively as new measurements become available, thereby defining a Bayes filter on the two-sphere.
}

\section{Conclusions}

This paper presented a global, coordinate-free formulation of the Fokker–Planck equation on Riemannian manifolds.  
By deriving both the Stratonovich and It\^{o} forms from first principles, the framework unifies stochastic dynamics, geometry, and tensor analysis under a single intrinsic structure.  
The introduction of the diffusion tensor field enabled an elegant and compact representation of the It\^{o} Fokker-Planck equation as a double divergence.  
Together, these developments establish a rigorous geometric foundation for diffusion and probability transport on nonlinear manifolds, offering a pathway for further advances in stochastic modeling, filtering, and control on curved spaces.

\bibliographystyle{ieeetran}
\bibliography{ref}

\appendix

\subsection{Proof of \Cref{lem:IBP}}\label{sec:IBP}

Here we show \eqref{eqn:IBP}, following the identities presented in~\cite{marsden2013introduction}.
The divergence of a vector field is defined through the Lie derivative as $\div_\mu(X)\,\mu = \mathcal{L}_X \mu$.  
From Cartan's formula, $\mathcal{L}_X \mu = d\, i_X \mu + i_X d\mu$, and since $\mu$ is a top form, $d\mu = 0$.  
Thus, $\div_\mu(X)\,\mu = d\, i_X\mu$.  
Integrating this identity over $M$ gives
\begin{align}
    \int_M \div_\mu(X)\, \mu 
    = \int_M d\, i_X\mu 
    =  \int_{\partial M} \iota^*(i_X \mu),\label{eqn:div_thm0}
\end{align}
where the second equality follows from Stokes’ theorem, and $\iota:\partial M \to M$ denotes the inclusion map.
Here, the pullback $\iota^*$ emphasizes that $i_X\mu$ is evaluated as a differential form on the boundary $\partial M$.

Since $fX$ is also a vector field for any $f\in C^\infty(M)$, \eqref{eqn:div_thm0} implies
\begin{align}
    \int_M \div_\mu(f X)\, d\mu = \int_{\partial M} \iota^*(i_{fX} \mu).\label{eqn:div_thm1}
\end{align}
But the product rule for divergence provides
\begin{align}
    \div_\mu(fX) = f\,\div_\mu(X) + \mathcal{L}_X f.\label{eqn:div_prod}
\end{align}
Substituting \eqref{eqn:div_prod} into \eqref{eqn:div_thm1},
\begin{align}
    \int_M f\, \div_\mu(X)\, \mu  
    + \int_M \mathcal{L}_X f\, \mu =  
    \int_{\partial M} \iota^*(i_{fX} \mu), \label{eqn:integ_part}
\end{align}
which gives an intrinsic form of the integration-by-parts identity.

Setting $f = pq$, the boundary term on the right-hand side vanishes.
Also, using the product rule of the Lie derivative given by $\mathcal{L}_X (pq) = p \mathcal{L}_X q + q \mathcal{L}_X p$, this is written as
\begin{align}
    \int_M pq\, \div_\mu(X)\, \mu + \int_M (p\mathcal{L}_X q + q\mathcal{L}_X p)\, \mu = 0 .
\end{align}
Recall that the Lie derivative is denoted by $\mathcal{L}_X q = X[q]$ as given in \eqref{eqn:lie_derivative}.
Thus, this is rearranged into
\begin{align*}
    \pair{p, X[q]} & = \pair{p, \mathcal{L}_X q} = \pair{-\mathcal{L}_X p - p\, \div_\mu(X), q}\\
                   & = \pair{-\div_\mu(pX), q},
\end{align*}
where the last equality is from the product rule of divergence given by \eqref{eqn:div_prod}.
This shows \eqref{eqn:IBP}.

\subsection{Proof of \Cref{thm:generator_Ito}}\label{sec:generator_Ito}

In coordinates, the full contraction of $D\in\mathcal{T}^{(2,0)}(M)$ and $\mathrm{Hess}_f\in\mathcal{T}^{(0,2)}(M)$ is given by
\begin{align*}
    D:\mathrm{Hess}_f = D^{jk} (\mathrm{Hess}_f)_{jk}.
\end{align*}
Let $\{E_j(x)\}_{j=1}^n$ be an orthonormal basis of $T_xM$ at each $x\in M$ with dual $\{E^j(x)\}_{j=1}^n$.
Then, we have
\begin{gather*}
    D(E^j, E^k) = \sum_{i=1}^m (\sigma_i\otimes\sigma_i)(E^j, E^k) = \sum_{i=1}^m E^j(\sigma_i) E^k(\sigma_i),\\
    (\mathrm{Hess}_f)_{jk} = \mathrm{Hess}_f (E_j, E_k). 
\end{gather*}
Substituting these, and using the bilinearity of the Hessian, 
\begin{align*}
    D:\mathrm{Hess}_f & = \sum_{i=1}^m E^j(\sigma_i) E^k(\sigma_i)\mathrm{Hess}_f (E_j, E_k) \\
    & = \sum_{i=1}^m   \mathrm{Hess}_f (E^j(\sigma_i) E_j, E^k(\sigma_i) E_k).
\end{align*}
Since $\sigma_i = E^j(\sigma_i)E_j$, this reduces to the concise relation given by
\begin{align}
    \sum_{i=1}^m \mathrm{Hess}_f(\sigma_i, \sigma_i)
    = D : \mathrm{Hess}_f. \label{eqn:D_Hess}
\end{align}

In \cite[Theorem 2]{lee2025brownian}, it has been shown that the generator of \eqref{eqn:SDE_Ito} is given by
\begin{align*}
    \mathcal{A}f = \tilde X[f] + \frac12 \sum_{i=1}^m \mathrm{Hess}_f(\sigma_i, \sigma_i),
\end{align*}
using \eqref{eqn:A_Strat} and \eqref{eqn:X_tilde}.
Substituting \eqref{eqn:D_Hess} into the above yields \eqref{eqn:A_Ito_D}.

\subsection{Proof of \Cref{lem:IBP_tensor}}\label{sec:IBP_tensor}

We begin by constructing a vector field $V\in\mathfrak{X}(M)$ from the tensor fields $T \in \mathcal{T}^{(r,s)}(M)$ and $S \in \mathcal{T}^{(s+1,r)}(M)$, by taking the contraction.
The $k$-th component of $V$ is
\[
    V^k = T^{I}_J S^{Jk}_{I},
\]
where $I=(i_1,\ldots, i_r)$, $J=(j_1,\ldots, j_s)$, and $k$ is the $(s+1)$-th contravariant index.
Since $T$ and $S$ have compact support, $V$ is also a vector field with compact support. 
We apply the divergence theorem given by \eqref{eqn:div_thm0} to obtain
\begin{align}
    \int_M \div_\mu (V) \, d\mu = \int_M \nabla_k V^k \, d\mu = 0. \label{eqn:div_V}
\end{align}
We now compute the divergence $\nabla_k V^k$ using the product rule as follows. 
\begin{align*}
    \div_\mu (V) & = \nabla_k  (T^{I}_J S^{Jk}_{I}) \\
                 & = (\nabla_k T^I_J) S^{Jk}_{I} + T^I_J (\nabla_k S^{Jk}_{I}).
\end{align*}
We interpret each term of the right-hand side as follows.
The components of the covariant derivative $\nabla T \in \mathcal{T}^{(r,s+1)}$ are $(\nabla T)^I_{Jk} = \nabla_k T^I_J$. Therefore, the first term is
\begin{align*}
    (\nabla_k T^I_J) S^{Jk}_{I} = (\nabla T)^I_{Jk} S^{Jk}_{I} =\nabla T:S.
\end{align*}
Next, the components of $\div_\mu (S)\in\mathcal{T}^{(s,r)}(M)$ are given by $(\div_\mu S)^{J}{I} = \nabla_k S^{Jk}_I$. 
Thus, the second term is
\begin{align*}
    T^I_J (\nabla_k S^{Jk}_{I}) = T^I_J (\div_\mu (S))^I_J = T: \div_\mu (S). 
\end{align*}
Therefore, $\div_\mu (V) = \nabla T: S + T:\div_\mu S$. 
Substituting this back into \eqref{eqn:div_V} and using \eqref{eqn:inner_L2_tensor}, we obtain \eqref{eqn:IBP_tensor}.

\end{document}